\newtheorem{thm}{Theorem}
\newtheorem{lem}[thm]{Lemma}
\newtheorem{prop}[thm]{Proposition}
\newtheorem{cor}[thm]{Corollary}
\newcommand\myurl[1]{\url{#1}}
\newcommand{\nc}{\newcommand}
\nc{\ssec}{\subsection}
\nc{\on}{\operatorname}
\nc {\cG} {\mathcal{G}}
\nc {\cK}{\mathcal{K}}
\nc {\cC} {\mathcal{C}}
\nc {\cL} {\mathcal{L}}
\nc {\cE} {\mathcal{E}}
\nc {\cM} {\mathcal{M}}
\nc {\cO}{\mathcal{O}}
\nc {\cF}{\mathcal{F}}
\nc {\cZ}{\mathcal{Z}}
\nc {\bZ}{\mathbb{Z}}
\nc {\uG} {\underline{G}}
\nc {\cB}{\mathcal{B}}
\nc{\rat}{\mathrm{rat}}
\nc {\fk}{\mathfrak{k}}
\nc {\fI}{\mathfrak{i}}
\nc {\fg} {\mathfrak{g}}
\nc {\fu} {\mathfrak{u}}
\nc {\fl} {\mathfrak{l}}
\nc {\fn} {\mathfrak{n}}
\nc {\cP} {\mathcal{P}}
\nc {\fz} {\mathfrak{z}}
\nc {\fc}{\mathfrak{c}}
\nc {\fh}{\mathfrak{h}}
\nc {\fp}{\mathfrak{p}}
\nc{\tg} {\mathtt{g}}
\nc {\hfg} {\widehat{\fg}}
\nc {\hG} {\check{G}}
\nc {\bGm} {\mathbb{G}_m}
\nc{\bC}{\mathbb{C}}
\nc{\bV}{\mathbb{V}}
\nc{\bP}{\mathbb{P}}
\nc{\bA}{\mathbb{A}}
\nc {\quo}{\mathopen{ /\!/}}
\nc {\llp} {\mathopen{ (\!(}}
\nc {\rrp} {\mathopen{ )\!)}}
\nc {\llb} {\mathopen{ [\![}}
\nc {\rrb} {\mathopen{ ]\!]}}
\nc{\bCpp} {\bC \llp t \rrp}
\nc{\Sl}{\mathfrak{sl}}
\nc{\ra}{\rightarrow}
\nc {\tU}{\tilde{U}}
\nc {\tSym}{\widetilde{Sym}}
\nc {\Bun}{\mathrm{Bun}}
\nc {\cA}{\mathcal{A}}
\nc {\Fun}{\mathrm{Fun}}
\nc {\crit}{\mathrm{crit}}
\nc {\Ind}{\mathrm{Ind}}
\nc {\Vac}{\mathrm{Vac}}
\nc {\gr}{\mathrm{gr}}
\nc {\ad}{\mathrm{ad}}
\nc {\Sym}{\mathrm{Sym}}
\nc {\Ram}{\mathrm{Ram}}
\nc {\Op}{\mathrm{Op}}
\nc {\Hitch}{\mathrm{Hitch}}
\nc {\fb}{\mathfrak{b}}
\nc{\cDt}{\mathcal{D}^\times}
\nc {\gl}{\mathfrak{gl}}
\nc {\Sp}{\mathfrak{sp}}
\nc {\So}{\mathfrak{so}}
\nc {\bR}{\mathbb{R}}
\nc {\Hom}{\mathrm{Hom}}
\nc {\Irr}{\mathrm{Irr}}
\nc {\GL}{\mathrm{Gl}}
\nc {\SL}{\mathrm{Sl}}
\nc {\Id}{\mathrm{Id}}
\nc {\Tr}{\mathrm{Tr}}
\nc {\rk}{\mathrm{rank}}
\nc {\Spec}{\mathrm{Spec}}
\nc {\rank}{\mathrm{rank}}
\nc {\Ad}{\mathrm{Ad}}
\nc {\tth}{\mathrm{th}}
\begin{document} 
\title{Stabilisers of eigenvectors of finite reflection groups} 

\begin{abstract}{
Let $x$ be an eigenvector for an element of a finite irreducible reflection group $W$. Let $W_x$ denote the subgroup of $W$ which stabilises $x$. 
We provide an upper bound for the number of roots in the root system of $W_x$ .  This generalises a result of Kostant, who showed that every eigenvector with eigenvalue a primitive $h^\mathrm{th}$ root of unity is regular, where $h$ is the Coxeter number of $W$. We also give a Lie-theoretic interpretation of our result in the study of semisimple conjugacy classes over  Laurent series. In a forthcoming paper, we use this result to establish a geometric analogue of a conjecture of Gross and Reeder.}
\end{abstract} 

\author{Masoud Kamgarpour} 
\address{School of Mathematics and Physics, The University of Queensland} 
\email{masoud@uq.edu.au}

\keywords{eigenvectors, reflection groups, root systems, Coxeter number} 

\thanks{The author is supported by an ARC DECRA Fellowship.}

\date{\today}

\subjclass[2010]{51F15, 20F55, 20G25}

\maketitle 
\tableofcontents

\section{Introduction} 
\subsection{Recollections on reflection groups} Let $V$ be a finite dimensional real vector space endowed with a  positive definite symmetric bilinear form $(.,.)$. Let $W$ be a finite subgroup of $\mathrm{GL}(V)$ generated by reflections. Thus, $W$ is a finite (real) reflection group. By slightly modifying the standard definition of a root system\footnote{For the standard definition see, for instance, \cite{Bourbaki}. In the modified definition that is used in the context of reflection groups all roots have the same length; see \cite{Humphreys}.}, one can attach a root system $\Phi$ to $W$, cf. \cite{Humphreys}. Henceforth, we assume that $\Phi$ is irreducible. 

 The rank of $W$ is, by definition, the dimension of the vector space $V$. We let $n$ denote this integer.
The ring of invariant polynomials  $\bR[V]^W=\Sym(V^*)^W$ is generated by algebraically independent homogenous polynomials $Q_1, \cdots, Q_n$. Let $d_i$ denote the degree of $Q_i$. These integers are known as the \emph{degrees} of $W$. We arrange these so that 
\[
d_1\leq d_2\leq \cdots \leq d_n.
\]
Note that $Q_i$'s are also algebraically independent generators for the invariant ring $\bC[V_\bC]^W$, where $V_\bC:=V\otimes_\bR \bC$.

The largest  degree $d_n$ is known as the \emph{Coxeter number} of $W$ and is denoted by $h$. 
A \emph{Coxeter element} of $W$ is an element which can be written as a product of all simple reflections, for some choice of a basis of simple roots. All Coxeter elements are conjugate; moreover, their order equals $h$. Furthermore, we have
\[
|\Phi|=nh.
\]
We refer the reader to \cite[V.6]{Bourbaki} or \cite{Humphreys} for these facts.

\subsection{Kostant's theorem on eigenvectors} We are interested in the eigenvectors of elements of $W$.  
Let $w\in W$ and $\lambda \in \bC$. An eigenvector for $w$ with eigenvalue $\lambda$ is a non-zero element $x\in V_\bC$ such that 
\[
w\cdot x=\lambda x.
\]

Recall that an element $x\in V_\bC$ is said to be \emph{regular} if its stabiliser in $W$ is trivial; i.e., $W_x=\{1\}$.

\begin{thm}[Kostant] Let $x\in V_\bC$ be an eigenvector of an element of $W$ with eigenvalue a primitive $h^\mathrm{th}$ root of unity.  Then $x$ is regular. 
\label{t:Kostant}
\end{thm}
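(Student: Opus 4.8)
The plan is to translate the eigenvector condition into the vanishing of invariant polynomials and then exploit the geometry of the quotient map $Q=(Q_1,\dots,Q_n)\colon V_\bC\to\bC^n$. By a theorem of Steinberg, $W_x$ is generated by the reflections fixing $x$, so $x$ is regular precisely when $(\alpha,x)\neq 0$ for every root $\alpha$; equivalently, when $\delta(x)\neq 0$, where $\delta=\prod_{\alpha\in\Phi^+}\ell_\alpha$ is the product of the linear forms cutting out the reflection hyperplanes. First I would use that each $Q_i$ is both $W$-invariant and homogeneous of degree $d_i$: writing $w\cdot x=\zeta x$ with $\zeta$ a primitive $h^\mathrm{th}$ root of unity gives
\[
Q_i(x)=Q_i(wx)=Q_i(\zeta x)=\zeta^{d_i}Q_i(x),
\]
so that $(\zeta^{d_i}-1)Q_i(x)=0$ for every $i$.

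The key arithmetic input is that $h=d_n$ occurs with multiplicity one among the degrees. Indeed $d_1=2$ is simple (the invariant quadratic form is unique up to scalar for irreducible $\Phi$), and the symmetry $m_i+m_{n+1-i}=h$ of the exponents $m_i=d_i-1$ then forces $m_{n-1}<m_n$, i.e.\ $d_{n-1}<d_n$. Hence $\zeta^{d_i}=1$ only for $i=n$, and the displayed relation yields $Q_i(x)=0$ for all $i<n$. Thus $x$ lies in the cone $Z:=\{Q_1=\cdots=Q_{n-1}=0\}=Q^{-1}(L)$, where $L\subset\bC^n$ is the $Q_n$-axis; since $Q$ is finite, $\dim Z=\dim L=1$. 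Now $\delta^2$ is $W$-invariant, hence equals $D(Q_1,\dots,Q_n)$ for a weighted-homogeneous polynomial $D$ of weighted degree $\deg\delta^2=|\Phi|=nh$. Restricting to $L$, the only monomial in $Q_n$ of this weight is $Q_n^{\,n}$, so $D|_L=\kappa\,Q_n^{\,n}$ for a constant $\kappa$. Because $Q^{-1}(0)=\{0\}$ and $x\neq 0$, we have $Q_n(x)\neq 0$; therefore $\delta(x)^2=\kappa\,Q_n(x)^{n}$, and $x$ is regular if and only if $\kappa\neq 0$.

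Everything thus reduces to the single statement $\kappa\neq 0$, equivalently the existence of one nonzero regular vector in $Z$. This is where the Coxeter element $c$ enters: it has order $h$ and eigenvalues $\zeta^{m_1},\dots,\zeta^{m_n}$ with $m_1=1$, so $\zeta$ is among them and any $\zeta$-eigenvector of $c$ lies in $Z$. The main obstacle is to show, without circularity, that $c$ admits a \emph{regular} $\zeta$-eigenvector, i.e.\ that the Coxeter element is a regular element. I would establish this directly, by showing that the line $V(c,\zeta)$ cannot be contained in the union of reflection hyperplanes; this is the heart of Springer's theory of regular elements, which one may alternatively cite. Granting it, $\delta$ is nonzero somewhere on $Z$, so $\kappa\neq 0$, and the computation above then shows that \emph{every} nonzero point of $Z$, in particular our $x$ and indeed the whole eigenspace $V(w,\zeta)\subseteq Z$, is regular.
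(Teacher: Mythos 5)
Your argument is correct, but it takes a genuinely different route from the paper's. The paper's proof of Theorem \ref{t:Kostant} is purely citational: it quotes Kostant's results that (ii) an eigenvalue which is a primitive $h^{\mathrm{th}}$ root of unity forces $w$ to be a Coxeter element, and (iii) every eigenvector of a Coxeter element is regular. Your proof eliminates step (ii) altogether: the relation $(\zeta^{d_i}-1)Q_i(x)=0$, combined with the multiplicity-one property $d_{n-1}<d_n=h$ (correctly deduced from the uniqueness of the quadratic invariant and the exponent symmetry $m_i+m_{n+1-i}=h$), places $x$ on the cone $Z=\{Q_1=\cdots=Q_{n-1}=0\}$, and the weighted-homogeneity identity $\delta^2=\kappa\, Q_n^{\,n}$ on $Z$ propagates regularity from a single nonzero point of $Z$ to all of them, irrespective of which $w\in W$ has $x$ as an eigenvector. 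What you must still import --- that a Coxeter element possesses a regular eigenvector with eigenvalue a primitive $h^{\mathrm{th}}$ root of unity --- is exactly the paper's item (iii) (Humphreys \S 3.19, Kane \S 29.6); you correctly flag this as the irreducible classical core, and it does have to be cited or proved by Steinberg's two-plane argument, since your discriminant machinery cannot generate it without circularity. The trade-off: the paper's proof is shorter given the literature, while yours imports strictly less (no appeal to Kostant's theorem that only Coxeter elements have such eigenvalues) and proves more, namely that \emph{every} nonzero point of $Z$ --- which is the set $V(h)$ in the notation of Proposition \ref{p:eigenspace} --- is regular. In effect you have reconstructed, in the special case $b=h$, the part of Springer's theory of regular elements that the paper itself invokes later for Proposition \ref{p:eigenspace} and Proposition \ref{p:nonregular}.
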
 

\begin{proof} The statement follows from three fundamental results of Kostant \cite[\S 9]{Kostant}. Since these results are proved in the setting of Weyl groups, we also give references for their generalisation to reflection groups: 
\begin{enumerate}  
\item[(i)] Every eigenvalue $\lambda$ must be a $b^\mathrm{th}$ root of unity, where $b$ is a divisor of one of the degrees of $W$, cf. \cite[Theorem 3.4]{Springer}.

\item[(ii)] If $\lambda$ is a primitive $h^\mathrm{th}$ root of unity, then $w$ is a Coxeter element of $W$, cf.  \cite[\S Theorem 32.C]{Kane}.

\item[(iii)] Every eigenvector of a Coxeter element is \emph{regular}; i.e., its stabiliser in $W$ is trivial. The proof for finite reflection groups is implicit in, e.g., \cite[\S 3.19]{Humphreys} or \cite[\S 29.6]{Kane}. 
 \end{enumerate} 
 \end{proof} 
 
Our main theorem is a generalisation of Kostant's theorem, where $h$ is replaced by an arbitrary positive integer.

\subsection{Main theorem} \label{ss:mainthm}
 
 For every $x\in V_\bC$, let $W_x< W$ denote the stabiliser subgroup; i.e., 

\[
W_x=\{ w\in W \, | \, w\cdot x=x \}.
\]

As we shall see in \S \ref{ss:parabolic}, $W_x$ is a parabolic subgroup of $W$. Now let $\Phi_x\subseteq \Phi$ denote the root system of $W_x$. Set 
\[
N(x):=|\Phi|-|\Phi_x|.
\] 
Equivalently, $N(x)$ is the number of roots in $\Phi$ whose pairing with $x$ is non-zero; i.e., 
\[
N(x)=|\{ \alpha \in \Phi \, | \, (\alpha,x)\neq 0\}|.
\]
Note that $x$ is regular if and only if $N(x)=|\Phi|=nh$. 

The following is our main result: 

\begin{thm}\label{t:main} Let $W$ be a  finite irreducible reflection group of rank $n$. Let $b$ a positive integer and let   $x$ be an eigenvector for an element of $W$ with eigenvalue a primitive $b^\mathrm{th}$ root of unity. Then
\begin{equation} \label{eq:main}
N(x) \geq bn.
\end{equation} 
Moreover,  equality is achieved if and only if $b=h$. 
\end{thm}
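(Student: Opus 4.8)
The plan is to rewrite the desired inequality as an upper bound on the sub-root system $\Phi_x$: since $|\Phi|=nh$, the estimate \eqref{eq:main} is equivalent to $|\Phi_x|\le n(h-b)$, and the equality clause becomes $\Phi_x=\emptyset\iff b=h$, which by Theorem \ref{t:Kostant} is the regularity of $x$. First I would record the basic geometry. Writing $\zeta$ for the eigenvalue and $w$ for the element with $w\cdot x=\zeta x$, a one-line computation shows that $w$ normalises $W_x$: if $w'\in W_x$ then $(ww'w^{-1})x=ww'(\zeta^{-1}x)=x$. Consequently $w$ permutes $\Phi_x$, preserves $U:=\on{span}_\bR(\Phi_x)$ and its orthogonal complement, and stabilises the (at most two-dimensional) real subspace $P$ spanned by the real and imaginary parts of $x$, on which it acts with order $b$; moreover $\Phi_x=\{\alpha\in\Phi\mid \alpha\perp P\}$ and $N(x)=\#\{\alpha\mid\alpha\not\perp P\}$.

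The engine of the argument is the identity $(w^{j}\alpha,x)=\zeta^{-j}(\alpha,x)$, valid because $w$ is orthogonal. From it I would extract two facts. First, if $(\alpha,x)\ne0$ then $w^{j}\alpha=\alpha$ forces $\zeta^{j}=1$, so the stabiliser of $\alpha$ in $\langle w\rangle$ lies in $\langle w^{b}\rangle$; hence every $\langle w\rangle$-orbit in $\Phi\setminus\Phi_x$ has cardinality divisible by $b$, and in particular $b\mid N(x)$. Second, summing over $\Phi$ and using $\zeta^{k}\ne1$ for $0<k<b$ gives the vanishing power sums $\sum_{\alpha}(\alpha,x)^{k}=0$ for $0<k<b$; by Newton's identities a nonzero multiset of complex numbers with this many vanishing power sums must have at least $b$ members, which re-proves $N(x)\ge b$. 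The same vanishing constrains $x$ to the common zero locus of the low-degree $W$-invariants, and produces the auxiliary eigenvectors $\xi_k:=\sum_\alpha(\alpha,x)^k\alpha\in V(w,\zeta^k)$, with $\xi_1$ a nonzero multiple of $x$.

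To promote the factor $b$ to $bn$ I would pass to the reflection subquotient. The stabiliser of $x$ inside $N_W(W_x)$ equals $W_x$, so $x$ is a regular vector for the reflection group $\bar W_x:=N_W(W_x)/W_x$ acting on $V^{W_x}$, and the image $\bar w$ of $w$ is a $\zeta$-regular element there. Springer's theory of regular elements then forces $\bar w$ to have order exactly $b$ and determines the eigenvalues of $w$ on $V^{W_x}$ together with the degrees of $\bar W_x$ divisible by $b$. Feeding this regularity on the subquotient back into the root count, together with the power-sum constraints above, is what should upgrade $N(x)\ge b$ to the sharp $N(x)\ge bn$ and isolate equality at $b=h$ (the direction $b=h\Rightarrow N(x)=bn$ being already Theorem \ref{t:Kostant}).

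The main obstacle I anticipate is precisely this last conversion: the orbit and Newton arguments cleanly yield divisibility and the weak bound $N(x)\ge b$, but producing the exact coefficient $n$—and strictness for every $b<h$—requires genuinely using that $W$ has rank $n$ and that $h$ is the unique largest degree. Given the Lie-theoretic framing of the paper, I expect the most transparent uniform route is through the affine Weyl group and Kac coordinates, where $b$ is the order of the associated torsion element, $n$ is the rank, and the Coxeter number appears as the largest regular number. Failing a uniform argument, the inequality can be verified on the classification of finite irreducible reflection groups, the dihedral and classical families being handled by the explicit root computations sketched above.
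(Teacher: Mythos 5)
Your preliminary observations are correct and even elegant: $w$ normalises $W_x$, each $\langle w\rangle$-orbit on $\Phi\setminus\Phi_x$ has length divisible by $b$ (so $b\mid N(x)$), and the vanishing of the power sums $\sum_\alpha(\alpha,x)^k$ for $0<k<b$ gives $N(x)\geq b$ via Newton's identities. But this is where the proof stops, and you say so yourself: nothing in the proposal produces the factor $n$, which is the entire content of Theorem \ref{t:main} --- the bound $N(x)\geq b$ is weaker by a factor of the rank, and the divisibility $b\mid N(x)$ would only close the gap if you already knew $N(x)>b(n-1)$, which is exactly what is missing. The subquotient idea is not a proof: even granting that $N_W(W_x)/W_x$ acts on $V^{W_x}$ as a reflection group (itself a nontrivial theorem, of Howlett/Lehrer--Springer type, which you do not quote), and that the image of $w$ is a $\zeta$-regular element there, you give no mechanism converting Springer-theoretic data of this quotient (its degrees, the order of the image of $w$) into a count of roots of $W$ not orthogonal to $x$; the roots of the quotient group are not a subset of $\Phi$, and $|\Phi|-|\Phi_x|$ is not computed by any of its invariants in an evident way. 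The hoped-for uniform route via affine Weyl groups and Kac coordinates is speculation, and would in any case not cover the non-crystallographic groups $H_3$, $H_4$, $I_2(m)$.

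The paper's actual proof is the ``fallback'' you mention but do not execute: a case-by-case verification over the classification, and the substance lies entirely in those cases. In types $A$, $B$, $D$ one describes eigenvectors explicitly (the coordinates of $x\in V(b)$ are the roots of a polynomial of the form $X^n+a_1X^{n-b}+\cdots+a_kX^{n-bk}$), identifies the largest possible stabiliser (for instance $(S_k)^b\times S_{n-bk}$ in type $A$), and verifies the resulting numerical inequality. In the exceptional types one combines three ingredients: the quadratic invariant $Q$ (since $Q(x)=0$ whenever $b>2$, Corollary \ref{c:parabolic} forces $\rank(W_x)\leq n-2$), Springer's result that a parabolic subgroup can contain a $b$-eigenvector only if $b$ divides one of its degrees (Proposition \ref{p:nonregular}), and an application of Kostant's theorem inside a parabolic of type $D_5$ to handle $E_6$ with $b=8$. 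None of this --- neither the determination of maximal stabilisers in the classical types nor the parabolic-degree analysis in the exceptional ones --- appears in your proposal, so the argument as written does not prove the theorem.
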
 
We prove this theorem  by using the classification of irreducible finite reflection groups; see \S \ref{s:proof}.
One can interpret this theorem as providing (yet another) characterisation of the Coxeter number.

\subsection{Semisimple conjugacy classes} 
Let $G$ be a simple simply connected algebraic group over $\bC$. Let $\fg$ denote its Lie algebra. Let $\fh$ be a Cartan subalgebra of $\fg$ and let $W$ denote the Weyl group. 

Let $\cK$ be a field containing $\bC$.  The group $G(\cK)$ acts on $\fg(\cK)$ by the adjoint action. The orbits  of this action are the conjugacy classes of $\fg(\cK)$. The set of conjugacy classes is denoted by $\fg(\cK)/G(\cK)$. Similarly, we have the conjugacy classes $\fg(\overline{\cK})/G(\overline{\cK})$.  There is a canonical map
\[
\fg(\cK)/G(\cK)\ra \fg(\overline{\cK})/G(\overline{\cK}) 
\]
which sends a $\cK$-conjugacy class to the unique $\overline{\cK}$-conjugacy class which contains it. 
It is natural to wonder what the image of this map is. 

Restricting attention to \emph{semisimple} conjugacy classes, the map in question can be written as 
\begin{equation}\label{eq:conjugacy}
\fg(\cK)^{\mathrm{ss}}/G(\cK) \ra \fg(\overline{\cK})^{\mathrm{ss}}/G(\overline{\cK}) = \fh(\overline{\cK})/W.
\end{equation}
Here $\fg(-)^{\mathrm{ss}}\subset \fg(-)$ is the subset of semisimple elements. The  equality in the above line is the well-known bijection between semisimple conjugacy classes in $\fg(\overline{\cK})$ and elements of $\fh(\overline{\cK})/W$, cf. \cite[Theorem 2.2.4]{CM}. Using Theorem \ref{t:main}, we can provide a constraint for the image of the above map, when $\cK=\bCpp$ is the field of Laurent series.

Before stating our result, we recall that
\[
\overline{\cK}=\bigcup_{b\in \bZ_{\geq 1}} \bC \llp t^{\frac{1}{b}} \rrp.
\]
Thus, every element of $\fh(\bar{\cK})$ is in $\fh(\bC \llp t^{\frac{1}{b}} \rrp)$ for some positive integer $b$. 

\begin{thm}\label{t:conjugacy} Suppose $X\in \fh(\overline{\cK} )$ is conjugate to a point of  $\fg(\cK)$.\footnote{Equivalently the class of $X$ in $\fh(\overline{\cK})/W$ is in the image of the map defined in \eqref{eq:conjugacy}.}
Let $x\in \fh\setminus \{0\}$ be the leading term of $X$; i.e.
\[
X=xt^{\frac{a}{b}} + \textrm{higher order terms},\quad \quad \gcd(a,b)=1, \quad b>0.
\]
Then $N(x)\geq bn$. Moreover, equality is achieved if and only if $b$ equals the Coxeter number of $\fg$.
\end{thm}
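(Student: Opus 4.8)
The plan is to deduce this statement from Theorem \ref{t:main} by a monodromy (Galois descent) argument, with the bulk of the work being the identification of a suitable Weyl group element. Recall that $\mathrm{Gal}(\overline{\cK}/\cK) \cong \widehat{\bZ}$ is topologically generated by the monodromy automorphism $\sigma$ determined by $\sigma(t^{1/m}) = e^{2\pi i/m}\, t^{1/m}$ for all $m \geq 1$; these prescriptions are compatible and $\sigma$ fixes $\cK$ pointwise. I would first record how $\sigma$ acts on a leading term. Since $\sigma$ preserves valuations, if $Z = z\, t^{a/b} + (\text{higher order terms})$ with $z \in \fh$ and $\gcd(a,b)=1$, then $\sigma(Z) = e^{2\pi i a/b}\, z\, t^{a/b} + (\text{higher order terms})$, and because $\gcd(a,b)=1$ the scalar $\zeta := e^{2\pi i a/b}$ is a primitive $b^\mathrm{th}$ root of unity. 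Note also that $\sigma$ maps $\fh(\overline{\cK})$ to itself, as $\fh$ is defined over $\bC \subset \cK$.

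Next I would extract a Weyl group element from the hypothesis that the class of $X$ lies in the image of \eqref{eq:conjugacy}. By assumption there is a semisimple $Y \in \fg(\cK)$ and $g \in G(\overline{\cK})$ with $Y = \Ad(g)\, X$. As $Y$ has coefficients in $\cK$, it is fixed by $\sigma$, whence $\Ad(g)\, X = \sigma(Y) = \Ad(\sigma(g))\, \sigma(X)$, and therefore $\sigma(X) = \Ad(\sigma(g)^{-1} g)\, X$. Thus $\sigma(X)$ and $X$ are $G(\overline{\cK})$-conjugate elements of $\fh(\overline{\cK})$. Invoking the bijection $\fh(\overline{\cK})/W = \fg(\overline{\cK})^{\mathrm{ss}}/G(\overline{\cK})$ recalled in \eqref{eq:conjugacy} (valid since $\overline{\cK}$ is algebraically closed), I conclude that $\sigma(X) = w \cdot X$ for some $w \in W$. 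Crucially, arguing through this bijection rather than through the normaliser $N(H)$ avoids any regularity hypothesis on $X$, which is precisely what allows the leading term $x$ to have a nontrivial stabiliser.

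Comparing leading terms on the two sides of $\sigma(X) = w \cdot X$ then finishes the reduction. The leading term of $w \cdot X$ is $w \cdot x$, since $w$ acts $\bC$-linearly on $\fh$ and fixes the powers of $t$ (and $w\cdot x \neq 0$ as $w$ is invertible and $x \neq 0$); by the first paragraph the leading term of $\sigma(X)$ is $\zeta x$. Equating the coefficients of $t^{a/b}$ gives $w \cdot x = \zeta x$, so $x$ is an eigenvector of $w \in W$ with eigenvalue the primitive $b^\mathrm{th}$ root of unity $\zeta$. Theorem \ref{t:main} now yields $N(x) \geq bn$, with equality if and only if $b = h$. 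Since the Coxeter number of $\fg$ is by definition the Coxeter number $h$ of its Weyl group $W$, this is exactly the assertion.

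I expect the only genuinely delicate point to be the second paragraph: justifying that the descent datum produced by $\sigma$ is realised by an honest element $w \in W$, rather than merely by some $g$-dependent element of $G(\overline{\cK})$. Routing the argument through the conjugacy bijection $\fh(\overline{\cK})/W = \fg(\overline{\cK})^{\mathrm{ss}}/G(\overline{\cK})$, as above, is the clean way to sidestep the usual regularity requirement; one should nonetheless verify carefully that both $X$ and $\sigma(X)$ are genuinely Cartan-valued (which holds by the first paragraph) so that the bijection applies. Everything else is a formal comparison of leading terms.
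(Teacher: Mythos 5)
Your proof is correct, and it takes a genuinely different route from the paper's. The paper never produces a Weyl group element at all: it evaluates invariant polynomials on $X$. For $Q\in\bC[\fg]^G$ homogeneous of degree $d$, conjugation-invariance gives $Q(X)=Q(Y)\in\cK$ for the $\cK$-rational conjugate $Y$, and the expansion $Q(X)=t^{\frac{da}{b}}Q(x)+\textrm{higher order terms}$ together with $\gcd(a,b)=1$ forces $Q(x)=0$ whenever $b$ does not divide $d$. Transporting this to $\bC[\fh]^W$ via the Chevalley restriction isomorphism, Springer's theorem (Proposition \ref{p:eigenspace}) identifies this vanishing locus with $V(b)$, and Theorem \ref{t:main} finishes. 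You instead extract the eigenvector relation directly by Galois descent: the monodromy generator $\sigma$ satisfies $\sigma(X)=\Ad(\sigma(g)^{-1}g)\,X$, the injectivity half of the bijection $\fh(\overline{\cK})/W=\fg(\overline{\cK})^{\mathrm{ss}}/G(\overline{\cK})$ recalled in \eqref{eq:conjugacy} converts this into $\sigma(X)=w\cdot X$ for some $w\in W$, and comparison of leading coefficients gives $w\cdot x=\zeta x$ with $\zeta=e^{2\pi i a/b}$ a primitive $b^{\mathrm{th}}$ root of unity. Your steps are sound: the Galois equivariance of $\Ad$ holds because $G$, $\fg$ and the adjoint action are defined over $\bC\subset\cK$; $\sigma$ preserves the valuation on Puiseux series, so leading terms transform as you claim; and you are right that routing through the conjugacy bijection, rather than a normaliser argument, is what removes any regularity hypothesis on $X$. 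Comparing the two approaches: the paper's stays inside invariant theory and reuses Proposition \ref{p:eigenspace}, which it needs anyway for the classical types in the proof of Theorem \ref{t:main}, but that proposition is a nontrivial input; yours bypasses Springer's theorem entirely, using only the standard fact that $G(\overline{\cK})$-conjugate elements of $\fh(\overline{\cK})$ are $W$-conjugate (already asserted in \eqref{eq:conjugacy}), at the price of invoking the structure of $\mathrm{Gal}(\overline{\cK}/\cK)\cong\widehat{\bZ}$. Both proofs conclude identically, by applying Theorem \ref{t:main} to get $N(x)\geq bn$ with equality precisely when $b=h$.
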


We will prove this theorem  by  using Theorem \ref{t:main} and a result of Springer on eigenspaces of reflection groups; see \S \ref{s:proof}. In the forthcoming paper \cite{KS}, we  use Theorem \ref{t:conjugacy} to prove a geometric analogue of a conjecture of Gross and Reeder \cite{GR}. We refer the reader to \cite{RY} and \cite{GLRY} for other intriguing connections between Lie theory and the Gross-Reeder story.

 \subsection{Acknowledgement}  I would like to thank Jim Humphreys, Gus Lehrer, Peter McNamara, Chul-hee Lee.,  Daniel Sage, and Ole Warnaar for helpful conversations.

%%%%%%%%%%%%%%%%%%%%%%%%%%%%%%%%%%%%%%%%%%

\section{Parabolic subgroups}\label{s:parabolic} 
 In this section, we gather some facts regarding parabolic subgroups of finite reflection groups. These facts  are  used in the proof of the main theorem in the \emph{exceptional} types. The reader interested in the classical types (i.e., types $A$, $B$ and $D$) will only need Lemma \ref{l:parabolic} from this section.

\subsection{Recollections} \label{ss:parabolic} We keep the notation introduced in the previous section.   Let $\Delta\subset \Phi$ be a basis of simple roots. 
For every subset $I\subseteq \Delta$, let $W_I$ denote the subgroup of $W$ generated by $I$. Recall that a subgroup of $W$ is said to be \emph{parabolic} if it is of the form $wW_Iw^{-1}$ for some $w\in W$. 

Parabolic subgroups arise as stabilisers of the action of $W$ on $V$; that is, if $x\in V$ then $W_x$ is a parabolic subgroup, cf. \cite[\S 5.2]{Kane}. Similar result holds in the complex situation: 

\begin{lem}\label{l:parabolic}
 Let $x\in V_\bC$. Then $W_x$ is a parabolic subgroup of $W$. 
\end{lem}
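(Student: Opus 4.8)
The plan is to reduce the complex statement to the known real case, in which the stabiliser of a point of $V$ is parabolic (as recalled above from \cite[\S 5.2]{Kane}). First I would decompose $x\in V_\bC=V\otimes_\bR\bC$ as $x=u+iv$ with $u,v\in V$. Since every $w\in W$ acts $\bR$-linearly on $V$ and its action on $V_\bC$ is the $\bC$-linear extension, we have $w\cdot x=w\cdot u+i\,w\cdot v$ with $w\cdot u,w\cdot v\in V$. Comparing real and imaginary parts gives
\[
W_x=W_u\cap W_v,
\]
which is exactly the pointwise stabiliser of the real subspace $U:=\bR u+\bR v\subseteq V$.

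The difficulty is that $W_x$ is now expressed as an intersection of two parabolic subgroups, and such an intersection need not be parabolic in general; so I cannot simply invoke the real result for $u$ and for $v$ separately. Instead I would produce a single real vector whose stabiliser coincides with $W_x$. For each $w\in W$, write $V^w=\{z\in V : w\cdot z=z\}$ for its (real) fixed subspace; then $w\in W_x=W_U$ if and only if $U\subseteq V^w$, while $w\in W_y$ if and only if $y\in V^w$ for $y\in U$.

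Next I would choose $y\in U$ generically. For every $w\notin W_U$ the inclusion $U\subseteq V^w$ fails, so $U\cap V^w$ is a proper subspace of $U$. As $W$ is finite, the union $\bigcup_{w\notin W_U}(U\cap V^w)$ is a finite union of proper subspaces of $U$ and hence, since $\bR$ is infinite, does not exhaust $U$. Picking $y\in U$ outside this union ensures that $y\notin V^w$ for all $w\notin W_U$, so that $W_y\subseteq W_U$; the reverse inclusion $W_U\subseteq W_y$ is immediate from $y\in U$. Therefore $W_y=W_U=W_x$.

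Finally, since $y$ is a genuine element of $V$, the real result applies and shows that $W_y$ is a parabolic subgroup of $W$; as $W_x=W_y$, this completes the argument. The only genuine obstacle is the passage from the two point-stabilisers $W_u,W_v$ to a single one, which the genericity argument resolves cleanly using nothing beyond linear algebra over $\bR$ and the finiteness of $W$.
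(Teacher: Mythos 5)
Your argument is correct, but it takes a genuinely different route from the paper at the key step. Both proofs begin identically: decompose $x=u+iv$ with $u,v\in V$, observe that $w\cdot x = w\cdot u + i\,w\cdot v$, and conclude $W_x = W_u\cap W_v$. At this point the paper simply cites a result of Tits and Solomon (cf.\ \cite[Lemma 2]{Solomon} or \cite{Qi}) that the intersection of two parabolic subgroups of $W$ is again parabolic, and is done. You instead avoid that citation entirely by a genericity argument: identifying $W_x$ with the pointwise stabiliser of the real plane $U=\bR u+\bR v$, you pick $y\in U$ outside the finite union of proper subspaces $U\cap V^w$ (over $w\notin W_U$), so that $W_y = W_U = W_x$, and then apply the real point-stabiliser result \cite[\S 5.2]{Kane} to $y$. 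Your version is more self-contained --- it uses only the real case, finiteness of $W$, and the fact that a vector space over an infinite field is not a finite union of proper subspaces --- and in effect it reproves the special case of the Tits--Solomon lemma that is actually needed here (intersections of \emph{point stabilisers} are parabolic); indeed, since every parabolic subgroup of a finite reflection group is itself a point stabiliser, your argument essentially recovers the full intersection lemma in this setting. What the paper's route buys in exchange is brevity and an appeal to a statement valid for general Coxeter groups. One small presentational point: you should state explicitly that $W_U\subseteq W_y$ holds because $y\in U$ and stabilising $U$ pointwise in particular fixes $y$ --- you do say this, so the proof is complete as written.
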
 
\begin{proof} Write $x=a+bi$ where $a,b\in V$. The action of $w\in W$ is given by $w \cdot x=w \cdot a+(w \cdot b)i$. It follows that $W_x=W_a\cap W_b$. According to a result of Tits and Solomon (cf. \cite[Lemma 2]{Solomon} or \cite{Qi}), the intersection of two parabolic subgroups of $W$ is again a parabolic subgroup. Thus, $W_x$ is a parabolic subgroup.
 \end{proof} 

 Next, let $V_I$ denote the $\bR$-span of $I$ in $V$. Let $\Phi_I=\Phi\cap V_I$. Then $\Phi_I$ is a root system in $V_I$ with corresponding reflection group $W_I$, cf. \cite[\S 1.10]{Humphreys}. 
 
 Let $U_I$ denote the orthogonal complement of $V_I$ with respect to $(.,.)$. By definition, for every $\alpha\in I$ and $u\in U_I$ we have $(\alpha, u)=0$. Thus, we have 
\begin{equation}\label{eq:ann}
s_\alpha(u)=u, \quad \quad \alpha \in I, \quad u\in U_I,
\end{equation} 
where $s_\alpha$ is the reflection defined by the (simple) root $\alpha$.

 \subsection{Invariant quadratic polynomial} \label{ss:quadratic} 
 Let $W$ be a finite irreducible reflection group of rank $n>2$. Let $x$ be an element of $V_\bC$. In this section, we show that if $Q(x)= 0$, where $Q$ is the unique, up to scalar, invariant quadratic polynomial, then $\rank(W_x)\leq n-2$. 
 
  It is easy to write down a quadratic (homogenous) invariant polynomial. Namely, let  $e_1,\cdots, e_n$ be an orthonormal basis of $V$ with respect to $(.,.)$. 
Let $x\in V$ and write $x=\sum x_i e_i$. 
 Then the polynomial $Q\in \bR[V]$ defined by 
 \[
 Q(x):= \sum_{i=1}^n x_i^2
 \]
   is invariant under the action of the orthogonal group. In particular, it is invariant under the action of $W< O(V)$.  Moreover, every homogenous invariant polynomial of degree $2$ is a scalar multiple of $Q$.
   
   We can also regard $\{e_i\}$ as a $\bC$-basis for $V_\bC$. If $x\in V_\bC$,  we  write $x=\sum x_i e_i$ where  $x_i\in \bC$. The polynomial $Q(x)=\sum x_i^2$ is also  the unique, up to scalar, homogenous element of degree $2$ in the invariant ring $\bC[V_\bC]^W$.

 \begin{lem} \label{l:invariant} Let $A$ be a subspace of $V$ of dimension $n-1$. Let $x$ be a non-zero element in $V_\bC$ satisfying $(x,y)=0$ for all $y\in A_\bC$. Then $Q(x)\neq 0$.
 \end{lem}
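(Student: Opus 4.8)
The plan is to exploit the fact that $A$, being a subspace of the \emph{real} vector space $V$, is defined over $\bR$, so that its orthogonal complement is spanned by a real vector, which is automatically non-isotropic. The point worth flagging at the outset is that $Q$ is the $\bC$-bilinear extension of $(.,.)$ to $V_\bC$, not the Hermitian extension; consequently $Q(x)$ need \emph{not} be nonzero for a general nonzero $x\in V_\bC$, since $V_\bC$ contains isotropic vectors. The entire content of the lemma is that no such isotropic vector can be orthogonal to a hyperplane that is defined over $\bR$.

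First I would pass to the real orthogonal complement. Since $\dim_\bR A = n-1$ and $(.,.)$ is positive definite (hence nondegenerate) on $V$, the orthogonal complement $A^\perp\subset V$ is one-dimensional; I choose a nonzero vector $v$ spanning it. Positive-definiteness gives $(v,v)>0$, so in particular $v$ is non-isotropic.

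Next I would identify the orthogonal complement of $A_\bC$ inside $V_\bC$. Extending $(.,.)$ $\bC$-bilinearly to $V_\bC$ keeps it nondegenerate, since its Gram matrix in the orthonormal basis $e_1,\dots,e_n$ is the identity. Hence the orthogonal complement of the $(n-1)$-dimensional subspace $A_\bC$ is one-dimensional. Now $v$ is orthogonal to all of $A$, and the hypothesis $(x,y)=0$ for all $y\in A_\bC$ is $\bC$-linear in $y$ and therefore equivalent to orthogonality to $A$ itself (as $A$ spans $A_\bC$ over $\bC$); so $v$ lies in this complement. Therefore the orthogonal complement of $A_\bC$ equals $\bC v$, and the hypothesis forces $x=\lambda v$ for some $\lambda\in\bC$. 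Since $x\neq 0$, we have $\lambda\neq 0$.

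Finally I would compute $Q(x)=(x,x)=\lambda^2(v,v)$. Because $\lambda\neq 0$ and $(v,v)>0$, this is nonzero, which finishes the argument. I do not expect a genuine obstacle here; the only place demanding care is keeping the complex-bilinear versus Hermitian distinction straight, and reducing to the real, non-isotropic spanning vector $v$ is precisely the manoeuvre that handles it.
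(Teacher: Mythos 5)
Your proof is correct and takes essentially the same approach as the paper: both arguments reduce to the observation that $x$ must be a complex scalar multiple of a real vector $v$ spanning the real orthogonal complement $A^\perp$, whence $Q(x)=\lambda^2(v,v)\neq 0$ by positive definiteness of $(.,.)$ on $V$. The only difference is cosmetic --- the paper writes $v=(r_1,\dots,r_{n-1},1)$ in coordinates and computes $Q(x)=\bigl(1+\sum_{i=1}^{n-1} r_i^2\bigr)x_n^2$, while you work coordinate-free and additionally justify the identification $(A_\bC)^\perp=A^\perp\otimes_\bR\bC$ by a nondegeneracy and dimension count that the paper leaves implicit.
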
 
 
 \begin{proof} Let $A^\perp\subset V$ denote the orthogonal complement of $A$ with respect to $(.,.)$. Then $A^\perp$ is a line in $V$. Without the loss of generality, suppose $A^\perp$ is not in the hyperplane defined by setting the last coordinate equal to zero. Then there exists \emph{real} numbers $r_1,\cdots, r_{n-1}, r_n=1$ such that 
 \[
 A^\perp = \{ (a_1,\cdots, a_n) \in V \, | \, a_i=r_i.a_n \} 
 \]
 
By assumption  $x\in (A_\bC)^\perp = A^\perp\otimes_\bR \bC$; thus, we also have that $x_i=r_i x_n$ for $i=1,2, \cdots, n$. 
 Hence, 
  \[
 Q(x)=\sum_{i=1}^{n} x_i^2 =  (1+\sum_{i=1}^{n-1} r_i^2)x_n^2\neq 0.
 \]   
  \end{proof}

\begin{cor} \label{c:parabolic}
Let $x$ be a non-zero element of $V_\bC$ and suppose $Q(x)=0$. Then $\rank(W_x)\leq n-2$ 
\end{cor}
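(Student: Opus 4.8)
The plan is to prove the contrapositive: I will show that if $\rank(W_x) \geq n-1$ then $Q(x) \neq 0$, which is precisely the conclusion packaged in Lemma \ref{l:invariant}. The starting point is Lemma \ref{l:parabolic}, which tells us that $W_x$ is a parabolic subgroup, so $W_x = w W_I w^{-1}$ for some $I \subseteq \Delta$ and $w \in W$. Since $Q$ is $W$-invariant, replacing $x$ by $w^{-1}x$ alters neither the hypothesis $Q(x) = 0$ nor the rank of the stabiliser (because $W_{w^{-1}x} = w^{-1} W_x w$). Hence I may assume from the outset that $W_x = W_I$ is a \emph{standard} parabolic, which is what makes the real subspace $V_I$ available as input to Lemma \ref{l:invariant}.

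First I would translate the stabiliser condition into orthogonality. For each $\alpha \in I$ the reflection $s_\alpha$ fixes $x$, and from $s_\alpha(x) = x - 2\frac{(x,\alpha)}{(\alpha,\alpha)}\alpha$ this forces $(x,\alpha) = 0$; consequently $(x,y) = 0$ for every $y$ in the $\bC$-span of $I$, i.e. $x$ is orthogonal to $(V_I)_\bC$. Simultaneously, $\rank(W_x) = \rank(W_I) = \dim V_I = |I|$, since the subsystem $\Phi_I$ spans $V_I$ and $I$ is a basis of simple roots for it. Thus the rank assumption becomes a statement about $\dim V_I$.

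Now suppose $\rank(W_x) \geq n-1$. If $\rank(W_x) = n$, then $x$ is orthogonal to every root of $\Phi$; as $\Phi$ spans $V$ by irreducibility, this gives $x = 0$, contradicting $x \neq 0$. If $\rank(W_x) = n-1$, then $A := V_I$ is a real subspace of dimension $n-1$, and the orthogonality above says exactly that the non-zero vector $x$ satisfies $(x,y) = 0$ for all $y \in A_\bC$; Lemma \ref{l:invariant} then gives $Q(x) \neq 0$. In either case we contradict the hypothesis $Q(x) = 0$, so $\rank(W_x) \leq n-2$.

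I do not anticipate a serious obstacle, since the essential geometric content already lives in Lemma \ref{l:invariant}. The only points demanding care are the reduction to a standard parabolic (needed so that a genuine real hyperplane $V_I$ feeds the lemma) and the separate treatment of the full-rank case $\rank(W_x) = n$, which the lemma does not address directly and which must instead be ruled out via irreducibility.
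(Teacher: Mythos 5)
Your proof is correct and follows essentially the same route as the paper's: reduce to a standard parabolic $W_I$ via Lemma \ref{l:parabolic}, derive $(x,y)=0$ for $y\in V_I$ from the reflection formula, and apply Lemma \ref{l:invariant}. The only differences are presentational — you argue the contrapositive and spell out the rank-$n$ case explicitly, which the paper disposes of by noting that $W_x$ is proper when $x\neq 0$.
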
 

\begin{proof}
By Lemma \ref{l:parabolic}, $W_x$ is a parabolic subgroup. Since $x$ is non-zero, $W_x$ is a proper subgroup. We may assume that $W_x=W_I$ for a set of simple roots $I$. This means that for all $\alpha\in I$, we have $s_\alpha(x)=x$ which, in turn, implies that $(x,\alpha)=0$. It follows that $(x,y)=0$ for all $y\in V_I$. By the previous lemma, $\dim(V_I)=\rank(W_I) \leq n-2$. 
\end{proof}

\subsection{Eigenspaces} 
Let $\zeta$ be a primitive $b^\tth$ root of unity. For each $w\in W$, let $\Omega(w,\zeta) \subseteq V_\bC$ denote the eigenspace of $w$ with eigenvalue $\zeta$; i.e. 
\[
\Omega(w,\zeta):=\{ x\in V_\bC \, | \, w.x=\zeta.x\}.
\]
Let 
\begin{equation}\label{eq:eigenspace} 
\displaystyle V(b):=\bigcup_{w\in W}  \Omega(w,\zeta).
\end{equation} 
Thus, $V(b)$ is the set of all eigenvectors with eigenvalues a primitive $b^\tth$ root of unity.

The following result is due to Springer; see \cite[\S 3]{Springer}.
\begin{prop} \label{p:eigenspace}
$V(b)=\{x\in V_\bC\, | \, Q_i(x)=0\quad  \textrm{for all $i$ such that $b$ does not divide $d_i$.}\}.$
\end{prop}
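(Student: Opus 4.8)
The plan is to translate membership in $V(b)$ into a statement about the quotient $V_\bC/W$ and then read it off from the homogeneity of the $Q_i$. Fix the primitive $b^\tth$ root of unity $\zeta$ used to define $V(b)$. The first observation is purely formal: $x\in V(b)$ if and only if $w\cdot x=\zeta x$ for some $w\in W$, and since $\zeta x = w\cdot x$ exactly exhibits $\zeta x$ as a point of the $W$-orbit of $x$, this holds precisely when $\zeta x$ and $x$ are $W$-conjugate. Thus I would reformulate the entire problem as: characterise those $x\in V_\bC$ for which $x$ and $\zeta x$ lie in the same $W$-orbit.

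Next I would invoke the basic fact from invariant theory that, for the finite group $W$ acting on the affine space $V_\bC$ over the algebraically closed field $\bC$, the invariant ring $\bC[V_\bC]^W=\bC[Q_1,\dots,Q_n]$ separates $W$-orbits. Concretely, two points of $V_\bC$ lie in the same orbit if and only if they have the same image under the quotient map
\[
\pi\colon V_\bC\longrightarrow V_\bC/W=\Spec\bC[Q_1,\dots,Q_n]\cong \bA^n,\qquad x\longmapsto (Q_1(x),\dots,Q_n(x)).
\]
Applying this to the pair $x,\ \zeta x$, the condition $x\in V(b)$ becomes equivalent to the system of equations $Q_i(\zeta x)=Q_i(x)$ for $i=1,\dots,n$.

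The final step is a one-line computation with the grading. Since $Q_i$ is homogeneous of degree $d_i$, we have $Q_i(\zeta x)=\zeta^{d_i}Q_i(x)$, so the equation for index $i$ reads $(\zeta^{d_i}-1)Q_i(x)=0$. As $\zeta$ is a primitive $b^\tth$ root of unity, $\zeta^{d_i}=1$ holds exactly when $b\mid d_i$; hence the equation is vacuous when $b\mid d_i$ and forces $Q_i(x)=0$ when $b\nmid d_i$. Collecting these conditions yields precisely the asserted description of $V(b)$. As a sanity check, the resulting characterisation is manifestly independent of which primitive $b^\tth$ root $\zeta$ we picked, consistent with $V(b)$ being the set of all eigenvectors whose eigenvalue is \emph{some} primitive $b^\tth$ root of unity.

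I expect the only genuinely non-formal ingredient to be the orbit-separation property of $\bC[V_\bC]^W$; everything else is bookkeeping. This property is standard for finite groups acting on affine varieties over an algebraically closed field, and here it is underwritten by the Chevalley--Shephard--Todd theorem, which guarantees that $\bC[V_\bC]^W=\bC[Q_1,\dots,Q_n]$ so that $V_\bC/W$ really is $\bA^n$. If one preferred to avoid citing orbit separation directly, the same conclusion follows by noting that $\zeta$ acts on $V_\bC/W\cong\bA^n$ through the grading as the automorphism $(y_1,\dots,y_n)\mapsto(\zeta^{d_1}y_1,\dots,\zeta^{d_n}y_n)$, and that $x$ is a $\zeta$-eigenvector of some $w\in W$ exactly when $\pi(x)$ is fixed by this automorphism, whose fixed locus is cut out by $y_i=0$ for every $i$ with $\zeta^{d_i}\neq1$.
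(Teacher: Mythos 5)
Your proof is correct. The paper itself does not prove Proposition \ref{p:eigenspace} --- it simply quotes it from Springer \cite{Springer} --- and your argument (orbit separation by invariants of a finite group, the identification $\bC[V_\bC]^W=\bC[Q_1,\dots,Q_n]$, and homogeneity of the $Q_i$) is essentially Springer's original proof, so there is nothing to fault; note only that your closing ``alternative'' via the fixed locus of the graded automorphism of $V_\bC/W$ is not really an alternative, since identifying the fibres of $\pi$ with $W$-orbits is exactly the orbit-separation fact you were trying to avoid.
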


\subsection{Parabolic eigenspaces} \label{ss:eigenspace} Let $P=W_I$ be a parabolic subgroup of $W$. Let $w\in P$. Recall that we have a natural subspace $V_I\subseteq V$ on which $P$ acts. 
Let $\Omega_{P}(w,\zeta)\subseteq V_I\otimes_\bR \bC$ denote the eigenspace of $w$ with eigenvalue $\zeta$; i.e., 
\[
\Omega_P(w,\zeta):=\{ x\in V_I\otimes_\bR \bC \, | \, w.x=\zeta.x\}.
\]
 We call this the \emph{parabolic eigenspace} of $w\in W_I$. 
It is clear $\Omega_P(w, \zeta) \subseteq \Omega(w,\zeta)$. In fact, we have equality:

\begin{lem} \label{l:eigenspace} 
 Suppose $\zeta\neq 1$. Then $\Omega_P(w, \zeta) = \Omega(w,\zeta)$. 
\end{lem}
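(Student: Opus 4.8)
The plan is to show the reverse inclusion $\Omega(w,\zeta) \subseteq \Omega_P(w,\zeta)$, since the forward inclusion is already noted. The key structural fact I would exploit is the orthogonal decomposition $V = V_I \oplus U_I$, where $U_I$ is the orthogonal complement of $V_I$. This decomposition is $P$-stable: indeed $P = W_I$ is generated by the reflections $s_\alpha$ for $\alpha \in I$, each of which preserves $V_I$ (it acts as a reflection there) and, by equation \eqref{eq:ann}, fixes $U_I$ pointwise. Complexifying, we get a $w$-stable decomposition $V_\bC = (V_I)_\bC \oplus (U_I)_\bC$ for any $w \in P$.

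The main step is then a direct eigenspace computation relative to this decomposition. Take $x \in \Omega(w,\zeta)$ and write $x = v + u$ with $v \in (V_I)_\bC$ and $u \in (U_I)_\bC$. Because the two summands are preserved by $w$, the equation $w \cdot x = \zeta x$ splits as $w \cdot v = \zeta v$ and $w \cdot u = \zeta u$. But $w$ acts trivially on $U_I$ (since it fixes $U_I$ pointwise, by \eqref{eq:ann}), so the second equation reads $u = \zeta u$, i.e. $(\zeta - 1)u = 0$. Here is where the hypothesis $\zeta \neq 1$ is essential: it forces $u = 0$, whence $x = v \in (V_I)_\bC$. Thus $x \in \Omega_P(w,\zeta)$, giving the desired inclusion and hence equality.

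I do not anticipate a serious obstacle here; the proof is essentially a bookkeeping argument once the $P$-stability of the decomposition $V_\bC = (V_I)_\bC \oplus (U_I)_\bC$ is in place. The one point requiring a small amount of care is verifying that $w$ genuinely acts as the identity on $(U_I)_\bC$. This reduces to the real statement \eqref{eq:ann} that each generating reflection $s_\alpha$ ($\alpha \in I$) fixes $U_I$ pointwise; since $w$ is a product of such generators, it too fixes $U_I$ pointwise, and this extends $\bC$-linearly to $(U_I)_\bC$. With that observation the argument closes cleanly, and the hypothesis $\zeta \neq 1$ appears exactly at the point where it is needed to eliminate the $U_I$-component.
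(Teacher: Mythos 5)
Your proof is correct and follows essentially the same route as the paper: decompose $x = v + u$ along the complexified orthogonal splitting $V_\bC = (V_I \otimes_\bR \bC) \oplus (U_I \otimes_\bR \bC)$, use \eqref{eq:ann} to see that $w \in W_I$ fixes the $U_I$-component, and invoke $\zeta \neq 1$ to force $u = 0$. The only cosmetic difference is that you split the eigenvalue equation componentwise via $w$-stability of both summands, while the paper compares the two sides directly and concludes $u - \zeta u \in (V_I \cap U_I)\otimes_\bR \bC = 0$; these are the same argument.
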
 
\begin{proof} Let $x\in \Omega(w,\zeta)$.   Recall that we have the orthogonal decomposition $V=V_I\bigoplus U_I$ and its complexification
\[
V_\bC=V_I\otimes_\bR \bC \bigoplus U_I\otimes_\bR \bC.
\]
 Let us write
\[
x=v+u, \quad \quad v\in V_I\otimes_\bR \bC, \quad u\in U_I\otimes_\bR \bC.
\]
By \eqref{eq:ann}, we have $w.u=u$. 
Thus, 
\[
w.v+u=w.v+w.u= w(v+u)=\zeta(v+u).
\] 
It follows that $u-\zeta. u\in (V_I\cap U_I)\otimes_\bR \bC$; hence, $u=0$. It follows that $x\in V_I\otimes_\bR \bC$.
\end{proof}

\subsection{Non-regular elements} Let $b$ be a positive integer and $\zeta$ a primitive $b^{\tth}$ root of unity. The following result is due to Springer, see \cite[Lemma 4.12]{Springer}.

\begin{lem} 
Let $x\in V(b)$ be  a non-regular non-zero element. Then $x\in \Omega(w, \zeta)$, where $w\in W$ is an element with eigenvalue one. 
\end{lem}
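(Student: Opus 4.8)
The plan is to prove the statement by a minimality argument inside the coset of elements carrying $x$ to $\zeta x$, using only that $W_x$ is a nontrivial reflection subgroup together with Carter's lemma on moved spaces. First I would record the coset description. Since $x\in V(b)$ there is at least one $w_0\in W$ with $w_0.x=\zeta x$, and the full set
\[
S:=\{\,w\in W \mid w.x=\zeta x\,\}
\]
is the left coset $w_0W_x$: if $w.x=w'.x=\zeta x$ then $w^{-1}w'\in W_x$, and conversely $W_x$ fixes $x$. Thus $S$ is closed under right multiplication by any element of $W_x$.

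Because $x$ is non-regular we have $W_x\neq\{1\}$, and by Lemma \ref{l:parabolic} it is a nontrivial parabolic subgroup; in particular it is generated by reflections, so it contains at least one reflection $s_\beta$ with root $\beta\in V$. For $w\in W$ write $\mathrm{Mov}(w):=(w-\Id)V$ for the moved space, so that $\dim\mathrm{Mov}(w)=n-\dim V^{w}$ and $w$ has eigenvalue one precisely when $\mathrm{Mov}(w)\neq V$ (equivalently $V^{w}\neq 0$). I would then select $w\in S$ for which $\dim\mathrm{Mov}(w)$ is minimal.

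The claim is that this minimiser has eigenvalue one. Suppose not, so that $\mathrm{Mov}(w)=V$. Then in particular $\beta\in\mathrm{Mov}(w)$, and by Carter's lemma — multiplication by a reflection drops the dimension of the moved space by one exactly when the reflecting root already lies in it — we obtain $\dim\mathrm{Mov}(ws_\beta)=\dim\mathrm{Mov}(w)-1$. Since $s_\beta\in W_x$ we have $ws_\beta\in S$, contradicting minimality. Hence $\mathrm{Mov}(w)\neq V$, so $w$ has eigenvalue one; and as $w\in S$ we have $w.x=\zeta x$, i.e. $x\in\Omega(w,\zeta)$, which is the assertion. Note that this argument does not even use $\zeta\neq 1$, and it handles all the required cases uniformly.

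The only non-elementary input, and the step I expect to be the crux, is Carter's dimension formula for $\mathrm{Mov}(ws_\beta)$ (equivalently, that reflection length equals the codimension of the fixed space and changes by $\pm1$ under multiplication by a reflection, decreasing exactly when the root lies in the moved space). Everything else is routine: the coset description of $S$, the presence of a reflection in the nontrivial parabolic $W_x$, and the equivalence between $\mathrm{Mov}(w)=V$ and the absence of the eigenvalue one.
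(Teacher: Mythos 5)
Your proof is correct, but note that the paper does not actually prove this lemma: it is quoted as a result of Springer, with a citation to \cite[Lemma 4.12]{Springer}, so there is no in-paper argument to compare against. Your argument is therefore a genuine self-contained alternative, and it holds up. The coset description is right (from $w.x=\zeta x$ one gets $w^{-1}.x=\zeta^{-1}x$, whence $w^{-1}w'\in W_x$ for $w,w'\in S$, so $S=w_0W_x$); the existence of a reflection in $W_x$ genuinely requires Lemma \ref{l:parabolic} --- a priori the stabiliser of a complex vector need not be generated by reflections, but a nontrivial parabolic subgroup always contains one --- and you correctly invoke it; and the descent step is exactly Carter's lemma for orthogonal transformations of a Euclidean space, which applies to any finite real reflection group, not just Weyl groups. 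One small point worth making explicit: Carter's lemma is usually stated for left multiplication $s_\beta w$, while you multiply on the right; this is harmless because $\mathrm{Mov}(ws_\beta)=\mathrm{Mov}\bigl((ws_\beta)^{-1}\bigr)=\mathrm{Mov}(s_\beta w^{-1})$ and $\mathrm{Mov}(w^{-1})=\mathrm{Mov}(w)$ (the moved space is $w$-invariant, being the orthogonal complement of the fixed space), but the reduction deserves a line. As for what each route buys: the paper's citation keeps the exposition short and defers to Springer's eigenspace theory, which is needed elsewhere in the paper anyway; your argument is elementary and self-contained modulo one classical lemma, works uniformly (as you note, even for $\zeta=1$), and makes transparent exactly which structural inputs are used --- namely that $W_x$ is parabolic and that reflection length equals the codimension of the fixed space.
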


\begin{prop}\label{p:nonregular} Let $b$ be an integer bigger than one and 
 let  $x\in V(b)$ be a non-zero non-regular element. Then, we have: 
 \begin{enumerate} 
 \item[(i)] There exists a parabolic subgroup $P$ and $w\in P$ such that  $x\in \Omega_P(w,\zeta)$. 
 \item[(ii)] $b$ divides a degree of $P$.
 \end{enumerate} 
 \end{prop}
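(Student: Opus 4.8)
The plan is to feed $x$ into the lemma immediately preceding this proposition (Springer's \cite[Lemma 4.12]{Springer}) and then to manufacture the parabolic subgroup $P$ out of the fixed space of the resulting element. Since $x\in V(b)$ is nonzero and non-regular, that lemma produces an element $w\in W$ with $w\cdot x=\zeta x$ and with $1$ as an eigenvalue. As $w$ acts on $V$ as a real orthogonal transformation, its $1$-eigenspace is the complexification of a nonzero real subspace; choosing a nonzero vector $y$ in the latter we have $w\cdot y=y$, so $w\in W_y$. By \cite[\S 5.2]{Kane} the stabiliser $W_y$ is a parabolic subgroup, and it is proper because $y\neq 0$ while the irreducibility of $\Phi$ forces $V^W=0$. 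I would set $P:=W_y$.

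For part (i), since $b>1$ the root of unity $\zeta$ is not equal to $1$, so Lemma \ref{l:eigenspace} identifies the eigenspace $\Omega(w,\zeta)$ with the parabolic eigenspace $\Omega_P(w,\zeta)$; as $x\in\Omega(w,\zeta)$ by construction, this yields $x\in\Omega_P(w,\zeta)$. The only bookkeeping is that the definition of $\Omega_P$ and Lemma \ref{l:eigenspace} are stated for a standard parabolic $W_I$. Since $b$, $\zeta$, the eigenvector relation $w\cdot x=\zeta x$, and the two assertions of the proposition are all equivariant under simultaneous conjugation of $(x,w,P)$ by an element of $W$, there is no loss in conjugating so that $P=W_I$ is standard, after which Lemma \ref{l:eigenspace} applies verbatim.

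For part (ii), note that $x\neq 0$, so $\Omega_P(w,\zeta)\neq 0$; thus the primitive $b^{\tth}$ root of unity $\zeta$ occurs as an eigenvalue of $w$ in the reflection representation $V_I$ of the finite reflection group $P=W_I$. Writing $V_I$ as the orthogonal sum of the irreducible components of $P$, the vector $x$ has a nonzero component on some irreducible factor $P'$, which is then an eigenvector of the corresponding component of $w$ with eigenvalue $\zeta$. Springer's eigenvalue criterion (fact (i) in the proof of Theorem \ref{t:Kostant}, i.e. \cite[Theorem 3.4]{Springer}), applied to $P'$, forces $b$ to divide one of the degrees of $P'$; since the degrees of $P$ are the union of the degrees of its irreducible factors, $b$ divides a degree of $P$, which is the assertion.

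The one step requiring care is the passage from ``$w$ has eigenvalue $1$'' to a genuine proper parabolic subgroup containing $w$: one must observe that the real fixed space of $w$ is nonzero, that $w$ therefore lies in the stabiliser of a nonzero real vector, and that such a stabiliser is a proper parabolic subgroup. Everything else is the assembly of the cited lemma, Lemma \ref{l:eigenspace}, and Springer's degree criterion; the substantive input---the existence of $w$ with eigenvalue $1$ and $w\cdot x=\zeta x$---is exactly what the preceding lemma supplies.
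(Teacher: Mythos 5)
Your proof is correct and follows the paper's own argument almost exactly: invoke the preceding lemma to produce $w$ with $w\cdot x=\zeta x$ and eigenvalue $1$, take $P$ to be the stabiliser of a fixed vector of $w$ (a parabolic subgroup), identify $\Omega(w,\zeta)$ with $\Omega_P(w,\zeta)$ via Lemma \ref{l:eigenspace}, and deduce (ii) from Springer's criterion that a nonzero $\zeta$-eigenspace forces $b$ to divide a degree. The only (harmless) deviation is that you pick a \emph{real} fixed vector $y$ and cite Kane's result on stabilisers of real vectors, whereas the paper takes the complex eigenvector $z$ and applies Lemma \ref{l:parabolic}; your write-up also makes explicit the properness of $P$, the reduction to a standard parabolic, and the passage to irreducible factors of $P$, all of which the paper leaves implicit.
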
 

 \begin{proof} According to the previous lemma, $x\in \Omega(w, \zeta)$, where $w\in W$ is an element with eigenvalue one. Let $z\in V_\bC$ denote the corresponding eigenvector. Let $P=W_z$ denote the isotropy group. By definition, $w\in P$. By Lemma \ref{l:parabolic}, $P$ is a parabolic subgroup and by Lemma \ref{l:eigenspace}, the parabolic eigenspace coincides with the usual one. Thus, $x\in \Omega_P(w,\zeta)$, as required. 
 
 Part (ii) follows from the fact that $\Omega_P(w,\zeta)$ is non-empty if and only if $b$ divides a fundamental degree of $P$. 
\end{proof}

%%%%%%%%%%%%%%%%%%%%%%%%%%%%%%%%%%%%%%%%%%

\section{Proofs} \label{s:proof}

\subsection{Theorem \ref{t:main} in type $A_{n-1}$} 
 Let $n$ be an integer greater than one. 
Let $V$ be the subspace of the $n$-dimensional affine space consisting of $n$-tuples $(x_1,\cdots, x_n)$ satisfying $\sum x_i=0$. We use the elementary symmetric functions as invariant polynomials. Then $W\simeq S_n$.

\subsubsection{} Let $x\in V_\bC$ be a non-zero element. According to Lemma \ref{l:parabolic}, $W_x$ is a proper parabolic subgroup. It is easy to check that the parabolic of $W$ with the largest number of root is isomorphic to $S_{n-1}$. Thus, for all non-zero $x\in V_\bC$, we have 
\[
|\Phi|-|\Phi_x|\geq n(n-1)-(n-1)(n-2) = 2(n-1)>(n-1).
\]
Therefore, the theorem is evident for $b=1$. 
 Henceforth, we assume $1<b<n$.

 \subsubsection{} 
 Suppose $x=(x_1,\cdots, x_n)\in V(b)$. Then there exists $ 1\leq k\leq \frac{n}{b}$ and complex numbers $a_i$ such that the $x_i$'s are the roots of the polynomial 
\[
P(X):=X^n+a_1 X^{n-b} + \cdots + a_k X^{n-bk}, \quad \quad a_k\neq 0. 
\]
Now let $\alpha_1,\cdots, \alpha_k$ denote the roots of the polynomial 
\[
Q(Y):=Y^k + a_1Y^{k-1} + \cdots + a_k.
\]
Since $a_k\neq 0$ we have that  $\alpha_i\neq 0$ for all $i$. Let $\zeta$ be a primitive $b$th root of unity. Then the roots of $P(X)$ equal 
\[
\zeta^i \alpha_j, \quad \quad i\in \{1,2,\cdots, b\},\quad j\in \{1,2,\cdots, k\}
\]
together with $n-kb$ copies of $0$.

\subsubsection{} Let us ignore the requirement  $\sum x_i=0$. Then the  largest possible stabiliser for $x$  is  achieved  if and only if $\zeta^{i_1} \alpha_1=\zeta^{i_2} \alpha_2 = \cdots = \zeta^{i_{k}} \alpha_k$ for some integers $i_1,\cdots, i_k$.  In this case,
\[
W_x \simeq (S_k)^{b} \times S_{n-bk}.
\]

Thus, for every non-zero $x\in V_\bC$, we have 
\[
\begin{aligned} 
N(x) = |\Phi| - |\Phi_x| & \geq  |\Phi_{S_{n}}| - |\Phi_{(S_k)^{b} \times S_{n-bk}}| \\[1ex]
 			&  =     n(n-1)-[bk(k-1)+(n-kb)(n-kb-1)]\\[1ex]
			& =      2kbn-k^2b^2 - bk^2.
\end{aligned} 
\]

\subsubsection{} We claim that the above quantity is bigger than $b(n-1)$. Indeed, 
if $k=1$, then 
\[
N(x) \geq 2bn-b^2 -b > b(n-1),  
\]
where the last inequality follows from $b<n$. On the other hand, if $k>1$, then 
\[
N(x)= 2kbn-k^2b^2 - bk^2 \geq b(2kn - kn - k^2) > b(n-1). 
\]
Here, the second to last inequality follows from the fact that $kb\leq n$. The last inequality is equivalent to  
\[
n(k-1)>(k-1)(k+2)\iff n>k+2
\]
 which is true because $1<k\leq \frac{n}{2}$. This completes the proof in type $A$. \qed

\subsection{Theorem \ref{t:main} in type $B_n$} As we saw above, the proof in type $A$ has four steps. We sketch how these steps need to be modified in type $B$. Firstly, one checks that the parabolic with the largest number of roots is isomorphic to $B_{n-1}$. Thus, the theorem is evident for $b\leq 3$. Hence, we can assume $3<b<2n$. For the second step, one needs to consider the polynomial
\[
X^{2n}+a_1 X^{2n-b} + \cdots + a_k X^{2n-bk}, \quad \quad a_k\neq 0. 
\]
For the third step, one checks that the largest possible stabiliser is 
\[
W_x\simeq (S_k)^b \times W_{B_{n-bk-1}}.
\] 
Thus, 
\[
N(x)=|\Phi| - |\Phi_x|  \geq |\Phi_{B_{n}}| - |\Phi_{(A_{k-1})^{b} \times \Phi_{B_{n-bk-1}}}| \geq  5bkn - bk^2 - 2b^2k^2-2+2(n-bk).
\]
Finally, it is easy to check that the above quantity is bigger than $bn$. \qed

\subsection{Theorem \ref{t:main} in type $D_n$} 
Note $D_3\simeq A_3$, so we may assume $n\geq 4$. 
It is easy to check that the parabolic subgroup of $D_n$ with the largest number of roots is isomorphic to $D_{n-1}$. Thus, the theorem is evident for $b\leq 3$. Henceforth, we assume $4<b<2n$ and proceed as in type $A$. The polynomial we need to consider is
\[
X^{2n}+a_1 X^{2n-b} + \cdots + a_k X^{2n-bk}, \quad \quad a_k\neq 0, \quad a_i\in \bC.
\]
One checks that the stabiliser with the maximum number of roots is
\[
W_x\simeq (S_k)^b \times W_{D_{n-bk-1}}.
\] 
Therefore, we have 
\[
N(x)=|\Phi| - |\Phi_x|  \geq  |\Phi_{D_{n}}| - |\Phi_{(A_{k-1})^{b} \times \Phi_{D_{n-bk-1}}}| =  2n(n-1)  - [bk(k-1) - 2(n-bk-1)(n-bk-2)].
\]
Finally, one can show that the above quantity is bigger than  $bn$.\qed

\subsection{Theorem \ref{t:main} in exceptional types} We provide the complete proof in the case of $E_6$. The proof in other exceptional types is similar, but easier. In more details, as we shall see, the proof in type $E_6$ has four steps. For types $E_7$ and $E_8$, one only needs to imitate  the first three step.  For type $F_4$ one only needs the first two steps. Finally, for types $G$, $H$ and $I_2$, one only needs to imitate the first step. 

 \subsubsection{} Recall that $E_6$ has $72$ roots and its fundamental degrees are $2, 5, 6, 8, 9, 12$.
It is easy to check that the proper parabolic of $E_6$ with the largest number of roots is $D_5$ with $40$ roots. Thus, for all non-zero $x\in V_\bC$, we have 
\[
N(x)\geq 72-40=32>5\times6.
\]
Hence, the  theorem is evident for $b\leq 5$.

\subsubsection{} Henceforth, we assume $b>5$. 
Let $Q$ denote the unique, up to scalar, homogenous quadratic invariant polynomial; see \S \ref{ss:quadratic}. If $x$ is an eigenvector with eigenvalue a $b^\mathrm{th}$ root of unity, then $Q(x)=0$. By Corollary \ref{c:parabolic}, the stabiliser $W_x$ must be a parabolic subgroup of rank $\leq 4$. The maximum number of roots in a parabolic subgroup of $E_6$ of rank $\leq 4$  is $24$ (for $D_4$). Thus, 
\begin{equation} \label{eq:E_6}
N(x)\geq 72-24=48>6\times 6.
\end{equation} 
Hence, our result also holds for $b=6$. 

\subsubsection{} The remaining cases are $b=8$ and $b=9$. Let $x\in V(9)$ be a non-zero element. According to Proposition \ref{p:nonregular}, if $x$ is not regular, then there must exists a proper parabolic subgroup of $W$ with a fundamental degree divisible by $9$. But there is no such parabolic subgroup of $E_6$. Thus, $x$ must be regular, and so the theorem is immediate. 

\subsubsection{} It remains to treat the case $b=8$. Suppose $x$ is a non-regular non-zero element of $V(8)$. According to Proposition \ref{p:nonregular}, there exists a parabolic subgroup $P=W_I< W$ and $w\in P$ such that 
\[
x\in \Omega(w,\zeta) = \Omega_P(w,\zeta)\subseteq V_I,
\]
where $\zeta$ is a primitive $8^\mathrm{th}$ root of unity. 
 Moreover the parabolic $P$ must have a fundamental degree divisible by $8$. The only possibility is $P\simeq D_5$.  But in this case, $8$ is the \emph{highest} fundamental degree of $P$ and so, by Kostant's theorem (Theorem \ref{t:Kostant}), $x\in V_I$ is \emph{regular} for the reflection action $P$ on $V_I$. In other words, 
 \[
 (\alpha,x)\neq 0,\quad \quad \forall \, \alpha\in I.
 \]
 Since $|I|=5$, it follows that  $(\alpha,x)$ is zero for at most one simple root of $W$. Hence, either $x$ is regular or $W_x\simeq A_1$. In both cases, we have $N(x)>6\times 8$. \qed

\subsection{Theorem \ref{t:conjugacy}}\label{s:conjugacy}
In view of Theorem \ref{t:main}, it is sufficient to show that  $x\in V(b)$; i.e., $x$ is an eigenvector for some element of $W$ with eigenvalue a $b^\mathrm{th}$ root of unity. 

 Let $Q\in \bC[\fg]^G$ be an invariant homogenous polynomial of degree $d$. Note that $Q$ is also an invariant polynomial on $\fg(\cK)$ and $\fg(\overline{\cK})$.   Let $Y\in G(\overline{\cK}).X \cap \fg(\cK)$. Since $X$ and $Y$ are conjugate, we have  $Q(X)=Q(Y)$. On the other hand, $Y$ is a $\cK$-rational point; thus, $Q(X)\in \cK$.
 
  Now observe that
  \[
  Q(X)=Q(x t^{\frac{a}{b}}) + \textrm{higher order terms} = t^{\frac{da}{b}} Q(x) + \textrm{higher order terms}.
  \]
  By assumption $\gcd(a,b)=1$; thus, for the leading term of the above expression to be in $\cK=\bCpp$, we must have 
  \begin{equation} \label{eq:chevalley} 
  \textrm{$Q(x)=0,\quad $ whenever $b$ does not divide $d$}.
  \end{equation} 
   By a theorem of Chevalley, the restriction map provides an isomorphism $\bC[\fg]^G \simeq \bC[\fh]^W$. Thus, \eqref{eq:chevalley} is also satisfied for all homogenous polynomials $Q\in \bC[\fh]^W$ of degree $d$. By Proposition \ref{p:eigenspace}, $x\in V(b)\qedhere$. \qed

 \begin{bibdiv}
\begin{biblist}

\bib{Bourbaki}{book}{ 
    AUTHOR = {Bourbaki, N.},
       TITLE = {\'{E}l\'ements de math\'ematique. {F}asc. {XXXIV}. {G}roupes
              et alg\`ebres de {L}ie. {C}hapitre {IV}: {G}roupes de
              {C}oxeter et syst\`emes de {T}its. {C}hapitre {V}: {G}roupes
              engendr\'es par des r\'eflexions. {C}hapitre {VI}: syst\`emes
              de racines},
    SERIES = {Actualit\'es Scientifiques et Industrielles, No. 1337},
 PUBLISHER = {Hermann, Paris},
      YEAR = {1968},
       }

\bib{CM}{book}{ 
    AUTHOR = {Collingwood, D. H.},
    Author = {McGovern, W. M.},
       TITLE = {Nilpotent Orbits in Semisimple {L}ie algebras},
      YEAR = {1993},
      Publisher = {Van Nostrand},
      Series = {Mathematics Series},
       }

\bib{GLRY}{article}
  {
    AUTHOR = {Gross, B.},
    Author={ Levy, P.}, 
    Author={Reeder, M.},
    Author = {Yu, J.K.},
     TITLE = {Gradings of positive rank on simple Lie algebras,},
   JOURNAL = {Transformation Groups},
      YEAR = {2012},
    NUMBER = {4},
     PAGES = {1123--1190},
}

\bib{GR}{article}
  {
   Author = {Gross, B.},
    AUTHOR = {Reeder, M.},
          TITLE = {Arithmetic invariants of discrete Langlands parameters},
   JOURNAL = {Duke Math. J.},
      YEAR = {2010},
    NUMBER = {154},
     PAGES = {431--508},
}

\bib{Humphreys}{article}
  {
   Author = {Humphreys, J. E.},
          TITLE = {Reflection Groups and {C}oxeter Groups},
   JOURNAL = {Cambridge University Press},
      YEAR = {1990},
}

\bib{Kane}{book}{
    AUTHOR = {Kane, R.},
     TITLE = {Reflection Groups and Invariant Theory},
    SERIES = {CMS Books in Mathematics/Ouvrages de Math\'ematiques de la
              SMC, 5},
 PUBLISHER = {Springer-Verlag, New York},
      YEAR = {2001},
}

\bib{Kostant}{article}
  {
   Author = {Kostant, B.},
          TITLE = {The principal three-dimensional subgroup and the Betti numbers
of a complex simple Lie group},
   JOURNAL = {Amer. J. Math.},
      YEAR = {1959},
      Pages ={973--1032}
}

\bib{KS}{article} 
{ 
Author = {Kamgarpour, M.},
Author={Sage, D.},
Title = {On the geometric analogue of a conjecture of Gross and Reeder},
Journal = {forthcoming},
Year = {2016}
}

\bib{Qi}{article}
  {
   Author = {Qi, D.},
          TITLE = {A note on parabolic subgroups of a Coxeter group},
   JOURNAL = {Expos. Math.},
      YEAR = {2007},
      Pages ={77--81}
}

\bib{RY}{article}
  {
    AUTHOR = {Reeder, M.},
      Author = {Yu, J.K.},
     TITLE = {Epipelagic representations and invariant theory},
   JOURNAL = {J. Amer. Math. Soc.},
      YEAR = {2014},
    NUMBER = {27},
     PAGES = {437--477},
}

\bib{Solomon}{article} 
{
	Author={Solomon, L.}, 
	Title={A Mackey formula in the group ring of a Coxeter group}, 
	Journal={Journal of Algebra}, 
	Year={1976},
	Volume={41},
	Number={2},
	Pages={255--264},
}

\bib{Springer}{article}
  {
   Author = {Springer, T. A.},
          TITLE = {Regular elements of finite reflection groups},
   JOURNAL = {Invent. Math.},
      YEAR = {1974},
      Pages ={159--198}
}

\end{biblist} 
\end{bibdiv} 
  \end{document}